\documentclass[11pt,a4paper,reqno]{amsart}

\usepackage[T1]{fontenc}
\usepackage[utf8]{inputenc}
\usepackage{lmodern}
\usepackage{microtype}
\usepackage{amsmath,amssymb,amsthm}
\usepackage[margin=27mm]{geometry}
\usepackage[hidelinks]{hyperref}
\newtheorem{theorem}{Theorem}[section]
\newtheorem{lemma}[theorem]{Lemma}
\newtheorem{corollary}[theorem]{Corollary}

\newcommand{\N}{\mathbb N}
\newcommand{\Qpos}{\mathbb Q_{>0}}
\newcommand{\T}{\mathbb T}

\title[Standard morphisms and Pythagorean triples]{Standard morphisms and Pythagorean triples}
\author{Jo\~ao Ara\'ujo}
\address{Departamento de Matem\'atica, NOVA School of Science and Technology, Universidade NOVA de Lisboa, 2829-516 Caparica, Portugal}
\email{joao.araujo@fct.unl.pt}
\author{Andr\'e Carvalho}
\address{Centro de Investigação em Matemática e Aplicações (CIMA), Departamento de Matem\'atica, Escola de Ci\^encias e Tecnologia, Universidade de \'Evora, \'Evora, Portugal}
\email{andre.carvalho@uevora.pt}
\date{}
\subjclass[2020]{Primary 05D10; Secondary 11D09, 11N64}
\keywords{Pythagorean triples, standard morphisms, completely multiplicative functions, unavoidability threshold}

\begin{document}

\begin{abstract}
Let $m\geq 1$, let $f:\mathbb N\to\mathbb Z/m\mathbb Z$ be a standard morphism and let $T(m)$ be the least integer $N$ such that every such $f$ admits a primitive monochromatic Pythagorean triple with hypotenuse at most $N$.   The aim of this note is to prove that every standard morphism has infinitely many identity-valued Pythagorean triples and infinitely many primitive monochromatic Pythagorean triples.  Thus the qualitative part of Problem~4.3 of Eliahou, Fromentin, Marion-Poty and Robilliard is solved for every $m$.  Moreover, $T(m)$ is finite, the morphism $n\mapsto v_3(n)\pmod m$ has least possible hypotenuse $(9^m+1)/2$ and $T(d)\leq T(m)$ when $d\mid m$.
\end{abstract}

\maketitle

\section{The qualitative problem}

Throughout this note $\N=\{1,2,\ldots\}$ and maps will act on the right.  A map
\[
 f:\N\longrightarrow\mathbb Z/m\mathbb Z
\]
is said to be a \emph{standard morphism} if
\[
 (ab)f=af+bf,\qquad a,b\in\N.
\]
Thus a standard morphism is determined freely by its values on the primes.

 Eliahou, Fromentin, Marion-Poty and Robilliard proved that the least
 integers $N$ such that every standard morphism admits a monochromatic
 Pythagorean triple with hypotenuse at most $N$ are $533$ and $4633$
 for $m=2$ and $m=3$, respectively \cite[Propositions~4.1 and~4.2]{EFMR}.  They asked in \cite[Problem~4.3]{EFMR} whether every standard morphism to $\mathbb Z/m\mathbb Z$, for $m\geq4$, admits a monochromatic Pythagorean triple and how the corresponding threshold grows with $m$.

We write $\mathbb T=\{z\in\mathbb C:|z|=1\}$ for the circle group. Frantzikinakis, Klurman and Moreira proved the following theorem \cite[Theorem~1.5]{FKM}.

\begin{theorem}\label{thm:FKM}
Let $h:\N\to\T$ be completely multiplicative and finite-valued.  Then there are distinct $x,y,z\in\N$ such that
\[
 x^2+y^2=z^2
 \quad\hbox{and}\quad
 xh=yh=zh=1.
\]
\end{theorem}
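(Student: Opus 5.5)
The plan is to reduce the existence of a $1$-valued triple to the positivity of a suitable average over the Euclid parametrization. Since $h$ is finite-valued and completely multiplicative, all its values lie in the group $\mu_k$ of $k$-th roots of unity for some $k$. Hence
\[
 1_{\{nh=1\}}=\frac1k\sum_{j=0}^{k-1}(nh)^j .
\]
I would search for triples of the form
\[
 x=m^2-n^2,\qquad y=2mn,\qquad z=m^2+n^2,\qquad m>n,
\]
possibly rescaled by a fixed factor $\ell$ with $\ell h$ controlled. Using $x=(m-n)(m+n)$, it suffices to show that
\[
 A(N)=\mathbb E_{(m,n)\in\Phi_N}\;1_{\{(m^2-n^2)h=1\}}\,1_{\{(2mn)h=1\}}\,1_{\{(m^2+n^2)h=1\}}
\]
is bounded below by a positive constant. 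Here $\Phi_N$ is a suitable weighted set of pairs with $m>n$, and it should be chosen so that the degenerate pairs (where $x=y$, or $n=0$) have density $o(1)$. Positivity then gives infinitely many distinct triples.

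Expanding the indicators writes $A(N)$ as $k^{-3}$ times a sum of correlations
\[
 \mathbb E_{m,n}\;h^{a}(m-n)\,h^{a}(m+n)\,h^{b}(m)\,h^{b}(n)\,h^{c}(m^2+n^2),
\]
together with the constant factor $(2h)^b$. Each $h^j$ is again finite-valued and completely multiplicative. The next step is to split each $h^j$ according to whether it is \emph{pretentious} or \emph{aperiodic}. In the pretentious case, $h^j$ pretends to be a Dirichlet character $\chi\,n^{it}$; finite-valuedness forces $t=0$. I would first prove that every correlation containing an aperiodic factor tends to $0$.
\begin{itemize}
\item For the linear factors $m$, $n$, $m\pm n$, this follows from a Gowers-uniformity or structure theorem for aperiodic multiplicative functions. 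The factor $m^2+n^2$ is handled by the next step.
\item For the factor $m^2+n^2$, I would lift to the Gaussian integers. Define $\tilde h(\alpha)=(N\alpha)h$ on $\mathbb Z[i]$; then $\tilde h(m+in)=(m^2+n^2)h$, and $\tilde h$ is multiplicative on $\mathbb Z[i]$. The whole correlation then becomes a correlation of multiplicative functions on $\mathbb Z[i]\cong\mathbb Z^2$ evaluated at linear forms in $(m,n)$.
\item What is needed is a two-dimensional concentration/uniformity estimate for such linear forms, and I would try to prove it there.
\end{itemize}

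For the purely pretentious terms, I would choose $\Phi_N$ adapted to the structure. Take $m\equiv n'\equiv$ suitable residues modulo a large modulus $Q$ that kills the characters. Then restrict, using multiplicative weights (in the style of a Turán--Kubilius concentration argument), to pairs on which each pretentious $h^j$ of each form is essentially the constant $\chi(\cdot)$. After this restriction, the main term of $A(N)$ is $k^{-3}$ times the number of $(a,b,c)$ whose aggregated character values equal $1$. The value of this count on the chosen residue class is exactly the local density of $1$-valued triples. It is positive: it is at least the $(a,b,c)=(0,0,0)$ contribution, provided the congruence class is chosen so that $h$ restricted to the relevant local data has value $1$. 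This may require the rescaling factor $\ell$, for instance $\ell=p^{r}$ with $r$ chosen so that $(\ell h)^3$ compensates, using $\ell^2$-homogeneity.

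I expect the main obstacle to be the aperiodic estimate involving the irreducible quadratic $m^2+n^2$ jointly with the linear forms. Ordinary one-dimensional structure theorems do not cover it. I would attack it first through the Gaussian-integer lift, applying a Katai/Bourgain--Sarnak--Ziegler-type orthogonality criterion on $\mathbb Z[i]$ to show that an aperiodic $\tilde h$ has vanishing correlations along these linear forms. If that fails, I would fall back on the full structure theorem for multiplicative functions on $\mathbb Z^2$.
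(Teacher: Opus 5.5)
The paper gives no proof of this statement: it is quoted from Frantzikinakis, Klurman and Moreira \cite[Theorem~1.5]{FKM} and used as a black box. Your outline (expand the indicators, then combine uniformity for aperiodic pieces with concentration for pretentious ones) is in the spirit of that work. As written, however, it is not a proof, and one of its announced steps is false.

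You classify the powers $h^j$ as pretentious or aperiodic \emph{on $\mathbb Z$}, and then assert that every correlation containing an aperiodic factor tends to $0$. This fails for the quadratic factor. Let $h(p)=-1$ for primes $p\equiv7\pmod 8$ and $h(p)=1$ for all other primes. Since $h^2=1$, the only functions $\chi(n)n^{it}$ that $h$ could pretend to have $t=0$ and $\chi$ real. By Dirichlet's theorem, such a $\chi$ would have to agree with $h$ on the classes $1,3,5,7\pmod 8$, that is, take the values $1,1,1,-1$. This is impossible because $3\cdot5\equiv7\pmod 8$, so $h$ is aperiodic. On the other hand, every prime $q\equiv3\pmod 4$ divides $m^2+n^2$ to an even power, so $h(m^2+n^2)=1$ for all $m,n$. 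Hence the term $(a,b,c)=(0,0,1)$ of your expansion is identically $1$. Aperiodicity of $h^c$ on $\mathbb Z$ says nothing about the lift $\tilde h^c(\alpha)=h^c(\alpha\bar\alpha)$ on $\mathbb Z[i]$; here $\tilde h\equiv1$. Such terms are neither killed by your aperiodic step nor counted in your ``purely pretentious'' main term. The dichotomy for the exponent $c$ must be made for $\tilde h^c$ on $\mathbb Z[i]$, with concentration estimates on $\mathbb Z[i]$ in the pretentious case.

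Even after this correction, the decisive estimate is missing. You need
\[
 \mathbb E_{(m,n)\in\Phi_N}\,h^a(m-n)\,h^a(m+n)\,h^b(m)\,h^b(n)\,\tilde h^c(m+in)\longrightarrow0,
\]
with the very weights and congruence restrictions that define $\Phi_N$, whenever $h^a$ or $h^b$ is aperiodic on $\mathbb Z$ or $\tilde h^c$ is aperiodic on $\mathbb Z[i]$.
\begin{itemize}
\item This is not a correlation of multiplicative functions on $\mathbb Z[i]$ along linear forms. The factors $h^a(m\pm n)$, $h^b(m)$ and $h^b(n)$ are not multiplicative functions of $m+in$.
\item So neither the Frantzikinakis--Host theorem for linear forms nor a structure theorem on $\mathbb Z[i]$ applies directly.
\item The two-dimensional factor $\tilde h^c(m+in)$ cannot be removed by Cauchy--Schwarz either: with an arbitrary bounded $F(m,n)$ in its place, the average need not be small. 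Any proof must use the multiplicativity of $\tilde h^c$.
\end{itemize}
You leave this as ``I would try to prove it'', but it is precisely the hard content of the cited theorem.

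The local step is also incomplete. Rescaling by $\ell$ multiplies $h(x)$, $h(y)$ and $h(z)$ by the same factor. It therefore cannot repair the relative discrepancies forced by the fact that, in a primitive triple, one leg is divisible by $4$, one leg by $3$ and one entry by $5$. For instance, for $n\mapsto v_3(n)\bmod M$, Theorem~\ref{thm:threshold}(2) shows that a leg must be divisible by $3^M$. The residue classes of $(m,n)$ must be constructed explicitly modulo high prime powers, whereas your proposal only asserts that a suitable class exists.
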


We call a pair $(u,v)\in\mathbb Q_{>0}^2$ a \emph{Pythagorean pair} if \( u^2+v^2=1. \)
The aim of this section is to derive the qualitative part of Problem~4.3 and the infinitude statements needed below.  We start with a separation lemma.

\begin{lemma}\label{lem:separation}
Let $q_1,\ldots,q_s\in\Qpos^{\times}\setminus\{1\}$ and let $m\geq1$.  There are a prime $\ell\nmid m$ and a homomorphism
\[
 \lambda:\Qpos^{\times}\longrightarrow\mathbb Z/\ell\mathbb Z
\]
such that $q_i\lambda\neq0$, for all $i\in\{1,\ldots,s\}$.
\end{lemma}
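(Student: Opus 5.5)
The plan is to exploit that $\Qpos^{\times}$ is a free abelian group on the primes. Every $q\in\Qpos^{\times}$ has a unique factorization $q=\prod_p p^{v_p(q)}$ with $v_p(q)\in\mathbb Z$, almost all zero. A homomorphism $\lambda:\Qpos^{\times}\to\mathbb Z/\ell\mathbb Z$ is determined freely by its values on the primes, and we will take it of the form $q\lambda = v_{p}(q)\bmod \ell$ for a single prime $p$ (a $p$-adic valuation reduced mod $\ell$), or more generally $q\lambda=\sum_p c_p v_p(q) \bmod \ell$.

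First, for each $i$ the hypothesis $q_i\neq 1$ lets us choose a prime $p_i$ with $e_i:=v_{p_i}(q_i)\neq0$. Let $P=\{p_1,\dots,p_s\}$, a finite set of primes, and consider the integer vectors $w_i=(v_p(q_i))_{p\in P}\in\mathbb Z^{P}$, each nonzero. Next choose integers $c_p$ ($p\in P$) such that the linear form $L(w)=\sum_{p\in P}c_pw_p$ does not vanish on any $w_i$. This is possible because a nonzero vector lies in the kernel of only a proper hyperplane's worth of forms: taking $c_p=M^{k_p}$ for an enumeration $k_p$ of $P$ and $M$ larger than twice $\max_{i,p}|v_p(q_i)|$, each $L(w_i)$ is a nonzero base-$M$ expansion with balanced digits, hence nonzero. (Alternatively, the union of finitely many proper hyperplanes in $\mathbb Z^{P}$ does not cover $\mathbb Z^P$.)

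Then the integers $N_i:=L(w_i)$ are all nonzero. Choose a prime $\ell$ exceeding $m$ and every $|N_i|$; then $\ell\nmid m$ and $\ell\nmid N_i$ for all $i$. Define $\lambda$ by $p\lambda=c_p \bmod \ell$ for $p\in P$ and $p\lambda=0$ for other primes; since $\Qpos^{\times}$ is free on the primes this extends uniquely to a homomorphism, and $q_i\lambda=N_i\bmod\ell\neq0$.

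The only step requiring any care is the simultaneous choice of the form $L$ avoiding all $w_i$; the choice $c_p=M^{k_p}$ disposes of it concretely, and the rest is the freeness of $\Qpos^{\times}$ plus the infinitude of primes.
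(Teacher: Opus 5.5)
Your proof is correct, but it takes a different route from the paper. The paper works directly over $\mathbb F_\ell$. It first picks a prime $\ell>s$ with $\ell\nmid m$ such that no valuation vector $v_i\in\mathbb Z^S$ vanishes modulo $\ell$. It then notes that the functionals on $\mathbb F_\ell^S$ killing $v_i$ form a hyperplane $H_i$ with $\ell^{|S|-1}$ elements. Hence $|\bigcup_i H_i|\le s\ell^{|S|-1}<\ell^{|S|}$, and some functional avoids all of them.

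You instead build the separating form in characteristic zero first, explicitly via $c_p=M^{k_p}$. The leading-digit estimate gives $L(w_i)\neq0$; in fact $M>\max_{i,p}|v_p(q_i)|$ already suffices. Only then do you choose $\ell$ large enough that reduction modulo $\ell$ cannot kill the nonzero integers $N_i$.

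What each buys: your version is fully constructive. It avoids both the counting argument and any need to control how the vectors $w_i$ reduce modulo $\ell$. The paper's version keeps $\ell$ small: it needs only $\ell>s$, $\ell\nmid m$, and $\ell$ not dividing all coordinates of any single $v_i$. Yours forces $\ell>\max_i|N_i|$, which can be of order $M^{|P|}$. Since the proof of Theorem~\ref{thm:main} uses only the existence of some suitable $\ell\nmid m$, either version serves equally well there.
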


\begin{proof}
Let $S$ be the set of primes occurring in the numerators and denominators of $q_1,\ldots,q_s$.  Under the valuation isomorphism the subgroup generated by $S$ is $\mathbb Z^S$.  By $v_i$ we will denote the non-zero vector corresponding to $q_i$.

Choose a prime $\ell>s$ such that $\ell\nmid m$ and no $v_i$ is zero modulo $\ell$.  In the dual space $(\mathbb F_\ell^S)^*$, the functionals vanishing on $v_i$ form a hyperplane, say $H_i$.  Therefore
\[
 \left|\bigcup_{i=1}^s H_i\right|
 \leq s\ell^{|S|-1}
 <\ell^{|S|}.
\]
Thus there is a linear functional which is non-zero on every $v_i$.  Extend it by zero on the valuations at the remaining primes.  The resulting homomorphism is the required $\lambda$.  The lemma is proved.
\end{proof}

We now prove the main result.

\begin{theorem}\label{thm:main}
Let $m\geq1$ and let $f:\N\to\mathbb Z/m\mathbb Z$ be a standard morphism.  Then:
\begin{enumerate}
\item there are infinitely many Pythagorean triples $(x,y,z)$ such that $xf=yf=zf=0$;
\item there are infinitely many primitive Pythagorean triples $(a,b,c)$ such that $af=bf=cf$.
\end{enumerate}
\end{theorem}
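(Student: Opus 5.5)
The plan is to apply Theorem~\ref{thm:FKM} to a twisted character, so that each application yields a \emph{new} Pythagorean pair, and then to pass to primitive triples by dividing out the gcd. Part~(1) alone is cheap: scaling a triple $(x,y,z)$ with $xf=yf=zf=0$ by $n^m$ preserves this, since $(n^m)f=m\,(nf)=0$. For part~(2), however, we need infinitely many genuinely distinct Pythagorean pairs $(x/z,y/z)$, so I will build the argument around that.

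\emph{Step 1 (one triple).} Let $h:\N\to\T$ be given by $nh=e^{2\pi i\,(nf)/m}$. This is well defined on $\mathbb Z/m\mathbb Z$, completely multiplicative because $f$ is additive, and finite-valued. Theorem~\ref{thm:FKM} gives distinct $x,y,z$ with $x^2+y^2=z^2$ and $xf=yf=zf=0$.

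\emph{Step 2 (new pairs).} Suppose primitive triples $(a_1,b_1,c_1),\dots,(a_r,b_r,c_r)$ have already been found. Consider the rationals
\[
a_i/c_i,\qquad b_i/c_i,\qquad i=1,\dots,r .
\]
None of them equals $1$, since $a_i,b_i<c_i$. Lemma~\ref{lem:separation} then gives a prime $\ell\nmid m$ and a homomorphism $\lambda:\Qpos^\times\to\mathbb Z/\ell\mathbb Z$ that is nonzero on all these rationals. Define
\[
nh'=e^{2\pi i\,(nf)/m}\,e^{2\pi i\,(n\lambda)/\ell}.
\]
This $h'$ is again completely multiplicative and finite-valued. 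Theorem~\ref{thm:FKM} gives a triple $(x,y,z)$ with $xh'=yh'=zh'=1$. The first factor is an $m$-th root of unity and the second an $\ell$-th root of unity, and $\gcd(m,\ell)=1$. So a product equal to $1$ forces both factors to be $1$. Hence
\[
xf=yf=zf=0,\qquad x\lambda=y\lambda=z\lambda=0 .
\]
Therefore $(x/z)\lambda=(y/z)\lambda=0$. Consequently $x/z\notin\{a_i/c_i,\;b_i/c_i\}$, so the pair $(x/z,y/z)$ differs from each earlier pair and from each earlier pair with coordinates swapped. This gives infinitely many triples with $xf=yf=zf=0$ and pairwise distinct ratios, which proves~(1) in a strong form.

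\emph{Step 3 (primitivity).} Let $g=\gcd(x,y,z)$ and $(a,b,c)=(x/g,y/g,z/g)$. Then $(a,b,c)$ is a primitive Pythagorean triple, and by additivity
\[
af=xf-gf=-gf,
\]
and likewise $bf=cf=-gf$. So $af=bf=cf$. The ratios $a/c=x/z$ and $b/c=y/z$ are those from Step~2, so this primitive triple is new. Iterating Step~2 produces infinitely many distinct primitive monochromatic triples, proving~(2).

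The only delicate point is Step~2. We must ensure that the twist by $\lambda$ really rules out the old triples, including their swapped versions, and that the combined character $h'$ still kills $f$. This is why both $a_i/c_i$ and $b_i/c_i$ go into Lemma~\ref{lem:separation}, and why the lemma's prime is chosen with $\ell\nmid m$, so that the orders $m$ and $\ell$ are coprime and the two conditions decouple.
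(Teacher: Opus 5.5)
Your proof is correct and follows essentially the same route as the paper. You apply Theorem~\ref{thm:FKM} once to $e^{2\pi i(nf)/m}$. You then use Lemma~\ref{lem:separation} with $\ell\nmid m$ and the product character, whose two components decouple because $\gcd(m,\ell)=1$, to force a genuinely new Pythagorean pair. Finally you divide by the gcd to obtain primitive monochromatic triples.

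The only differences are cosmetic. You iterate rather than argue by contradiction. You feed both $a_i/c_i$ and $b_i/c_i$ into the lemma to exclude swapped triples, whereas the paper's list of all pairs in $H$ is automatically closed under swapping. You obtain (1) by scaling a single triple by $n^m$, instead of scaling each primitive triple by $c^{m-1}$.
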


\begin{proof}
Extend $f$ to the homomorphism
\[
 \widehat f:\Qpos^{\times}\longrightarrow\mathbb Z/m\mathbb Z,
 \qquad
 (a/b)\widehat f=af-bf,
\]
and put $H=\ker\widehat f$.  The map
\[
 nh=e^{\frac{2\pi i(nf)}{m}}
\]
is completely multiplicative and finite-valued.  By Theorem~\ref{thm:FKM}, there is an identity-valued Pythagorean triple.  Dividing its two legs by its hypotenuse, it follows that $H$ contains a Pythagorean pair.

We claim that $H$ contains infinitely many Pythagorean pairs.  In fact, suppose that its Pythagorean pairs are
\[
 (u_1,v_1),\ldots,(u_s,v_s).
\]
Every $u_i$ belongs to $(0,1)$ and hence $u_i\neq1$.  Apply the previous lemma to $u_1,\ldots,u_s$.  Since $\ell\nmid m$, the group
\[
 \mathbb Z/m\mathbb Z\times\mathbb Z/\ell\mathbb Z
\]
is cyclic.  Thus the product homomorphism $(\widehat f,\lambda)$ embeds in $\T$.  Applying Theorem~\ref{thm:FKM} to its restriction to $\N$, we obtain a Pythagorean pair $(u,v)$ in $H\cap\ker\lambda$.  This pair is equal to one of the pairs chosen above.  Therefore $u_i\lambda=0$, for some $i$, a contradiction with the choice of $\lambda$.  It is proved that $H$ contains infinitely many Pythagorean pairs.

Let $(u,v)$ be one of these pairs.  There is a unique primitive Pythagorean triple $(a,b,c)$ such that
\[
 (u,v)=(a/c,b/c).
\]
Since $u,v\in H$, we have
\[
 af=cf
 \quad\hbox{and}\quad
 bf=cf.
\]
Apart from interchanging the two legs, distinct pairs give distinct primitive triples.  Hence (2) follows.

Now let $af=bf=cf=\alpha$ and put $t=c^{m-1}$ when $m>1$.  Then
\[
 (ta)f=(tb)f=(tc)f=m\alpha=0.
\]
When $m=1$ take $t=1$.  The rational pair associated with $(ta,tb,tc)$ is still $(u,v)$.  The infinitely many pairs constructed above therefore give infinitely many distinct identity-valued triples, and hence (1) is proved.  The theorem is proved.
\end{proof}

The following consequence will be useful when standard morphisms are viewed as finite quotients of $\Qpos^{\times}$.

\begin{corollary}\label{cor:cosets}
Let $H\leq\Qpos^{\times}$ and suppose that $\Qpos^{\times}/H$ is finite cyclic.  Every coset of $H$ contains infinitely many Pythagorean triples entrywise.
\end{corollary}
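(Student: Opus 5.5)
We read the statement as follows: a triple $(x,y,z)$ lies in a coset $gH$ entrywise if $x,y,z\in gH$ and $x^2+y^2=z^2$. The plan is to reduce to Theorem~\ref{thm:main} by realising $H$ as the kernel of a standard morphism. Let $m=|\Qpos^{\times}/H|$ and fix an isomorphism $\Qpos^{\times}/H\cong\mathbb Z/m\mathbb Z$. Composing the quotient map with this isomorphism gives a homomorphism $\widehat f:\Qpos^{\times}\to\mathbb Z/m\mathbb Z$ with kernel $H$. Its restriction $f$ to $\N$ is a standard morphism, and $\widehat f$ is exactly the extension used in the proof of Theorem~\ref{thm:main}.

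The first step is to find a natural number in every coset. Every coset has the form $(a/b)H$ with $a,b\in\N$. Since $b^m\in H$ (because $b^m\widehat f=0$), we have $(a/b)H=(ab^{m-1})H$. So each coset $gH$ contains some $n\in\N$.

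Next, Theorem~\ref{thm:main}(1) gives infinitely many Pythagorean triples $(x,y,z)$ with $xf=yf=zf=0$, that is, $x,y,z\in H$. Multiplying by $n$ gives triples $(nx,ny,nz)$. These still satisfy $(nx)^2+(ny)^2=(nz)^2$, and all three entries lie in $nH=gH$. Distinct triples stay distinct after scaling by the fixed $n$, so every coset contains infinitely many Pythagorean triples entrywise.

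There is no serious obstacle. The only point that needs care is making sure the scaled triples consist of natural numbers, and this is why we pick an integer representative $n$ of the coset rather than a rational one.
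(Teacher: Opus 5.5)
Your proposal is correct and follows essentially the same route as the paper: realise $H$ as the kernel of a standard morphism, take the infinitely many triples in $H$ from Theorem~\ref{thm:main}(1), and scale them by the integer representative $ab^{m-1}$ of the coset $(a/b)H$. The only cosmetic difference is that you use the order $m$ of the quotient where the paper uses its exponent $e$, and these coincide for a cyclic group.
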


\begin{proof}
Let $\varphi:\Qpos^{\times}\to\Qpos^{\times}/H$ be the quotient map.  The restriction of $\varphi$ to $\N$ is a standard morphism after identifying the quotient with a cyclic group.  By the previous theorem, $H$ contains infinitely many integral Pythagorean triples.

Let $e$ be the exponent of the quotient and let $qH$ be a coset.  Write $q=a/b$, where $a,b\in\N$, and put $t=ab^{e-1}$.  Since $t/q=b^e\in H$, we have $t\in qH$.  Multiplying every triple in $H$ by $t$ gives infinitely many triples whose three entries belong to $qH$.  The corollary is proved.
\end{proof}

\section{The unavoidability threshold}

For $m\geq1$, by $T(m)$ we will denote the least integer $N$ such that every standard morphism $f:\N\to\mathbb Z/m\mathbb Z$ has a primitive monochromatic Pythagorean triple with hypotenuse at most $N$.  Put $T(m)=\infty$ if there is no such integer.  Dividing a Pythagorean triple by the common divisor of its entries subtracts the same value of a standard morphism from the three colours.  Thus allowing non-primitive Pythagorean triples gives the same threshold.

The aim of this section is to prove the following theorem.

\begin{theorem}\label{thm:threshold}
Let $m\geq1$.  Then:
\begin{enumerate}
\item $T(m)<\infty$;
\item for the standard morphism $nf_m=v_3(n)\pmod m$, where $v_3(n)$ denotes the $3$-adic valuation of $n$, the least possible hypotenuse is $(9^m+1)/2$;
\item $T(d)\leq T(m)$ whenever $d\mid m$.
\end{enumerate}
\end{theorem}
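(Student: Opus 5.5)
The plan is to prove the three parts independently: (1) by a compactness argument reducing to Theorem~\ref{thm:main}, (2) by an explicit analysis of $3$-adic valuations in primitive triples, and (3) by composing with an injective homomorphism $\mathbb Z/d\mathbb Z\to\mathbb Z/m\mathbb Z$.

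\emph{Part (1).} I would argue by compactness. Suppose $T(m)=\infty$. Then for every $N$ there is a standard morphism $f_N$ with no primitive monochromatic Pythagorean triple of hypotenuse at most $N$. Whether a triple $(a,b,c)$ with $c\le N$ is monochromatic depends only on the values of $f_N$ at primes $p\le N$, since every prime dividing $a$, $b$ or $c$ is at most $c$. There are only finitely many assignments of values in $\mathbb Z/m\mathbb Z$ to the primes up to any given bound. A diagonal (König-type) argument therefore produces a single assignment $p\mapsto pf$ on all primes with the following property: for every $N$, its restriction to the primes $\le N$ agrees with the restriction of $f_{N'}$ for some $N'\ge N$. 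The resulting standard morphism $f$ then has no primitive monochromatic triple at all, which contradicts Theorem~\ref{thm:main}(2). I expect this to be routine; the only care needed is to state the extraction cleanly.

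\emph{Part (2).} Let $(a,b,c)$ be primitive. Since squares are $0$ or $1$ modulo $3$, the relation $a^2+b^2=c^2$ forces $3\nmid c$ and exactly one leg to be divisible by $3$. Hence $cf_m=0$, one leg has colour $0$, and the other leg has $3$-adic valuation $k\ge1$. So the triple is monochromatic if and only if $m\mid k$, which requires $3^m$ to divide that leg. I would write the triple as
\[
 (s^2-t^2,\;2st,\;s^2+t^2),\qquad \gcd(s,t)=1,\quad s\not\equiv t\pmod 2,
\]
and split into two cases.

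\begin{itemize}
\item If $3^m\mid 2st$, then $3^m$ divides $s$ or $t$, so $c\ge 9^m+1$.
\item If $3^m\mid s^2-t^2=(s-t)(s+t)$, then $\gcd(s-t,s+t)=1$ because $s-t$ and $s+t$ are odd and coprime. So $3^m$ divides one of these factors, and
\[
 c=s^2+t^2=\tfrac12\bigl((s+t)^2+(s-t)^2\bigr)\ge \tfrac12(9^m+1).
\]
\end{itemize}

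Equality is attained at $s=(3^m+1)/2$ and $t=(3^m-1)/2$. These are coprime and of opposite parity, and give the odd leg $3^m$, hence all three colours equal to $0$. Comparing the two cases shows that the minimum is exactly $(9^m+1)/2$.

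\emph{Part (3).} Let $\iota:\mathbb Z/d\mathbb Z\to\mathbb Z/m\mathbb Z$ be the map $x\mapsto (m/d)x$, which is an injective homomorphism. For any standard morphism $g:\N\to\mathbb Z/d\mathbb Z$, the composite $g\iota$ is a standard morphism to $\mathbb Z/m\mathbb Z$. By injectivity of $\iota$, a triple is monochromatic for $g\iota$ if and only if it is monochromatic for $g$. Hence $g$ has a primitive monochromatic triple with hypotenuse at most $T(m)$, and so $T(d)\le T(m)$.
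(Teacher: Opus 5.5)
Your proposal is correct and follows the paper's proof. Parts (2) and (3) are exactly the paper's arguments: the mod-$3$ observation, the split into $3^m\mid 2st$ versus $3^m\mid (s-t)(s+t)$, the extremal choice $s=(3^m\pm1)/2$, and the embedding $x\mapsto (m/d)x$. For (1), your König-type diagonal extraction is the sequential form of the paper's argument, which covers the compact space $(\mathbb Z/m\mathbb Z)^{\mathcal P}$ by the clopen sets of prime assignments making a given primitive triple monochromatic and then takes a finite subcover.
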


\begin{proof}
A standard morphism is determined by its values on the primes. Let $\mathbb P$ denote the set of primes.  Consider the compact product space
\[
 X_m=(\mathbb Z/m\mathbb Z)^{\mathcal P}.
\]
For every primitive Pythagorean triple $P$, let $U_P$ be the set of assignments for which $P$ is monochromatic.  The set $U_P$ depends on finitely many prime coordinates and hence is open and closed.  By Theorem~\ref{thm:main}, the sets $U_P$ cover $X_m$.  A finite number of them cover $X_m$ as well.  Taking the maximum of their hypotenuses, it follows that $T(m)<\infty$.  Thus (1) is proved.

We now prove (2).  The quadratic residues modulo $3$ are $0$ and $1$.  In a primitive Pythagorean triple the hypotenuse is not divisible by $3$, since otherwise both legs would be divisible by $3$.  Hence exactly one leg is divisible by $3$.  Therefore monochromaticity for $f_m$ forces the divisible leg to have positive $3$-adic valuation congruent to $0$ modulo $m$.  In particular, this leg is divisible by $3^m$.

Write the primitive triple as
\[
 \{a,b\}=\{r^2-s^2,2rs\},
 \qquad c=r^2+s^2,
\]
where $r>s>0$ are coprime and have opposite parity.  Suppose first that $3^m\mid r^2-s^2$.  The odd integers $r-s$ and $r+s$ are coprime.  Thus one of them is divisible by $3^m$, and hence
\[
 2c=(r-s)^2+(r+s)^2\geq3^{2m}+1.
\]
Suppose now that $3^m\mid2rs$.  As $r$ and $s$ are coprime, one of them is divisible by $3^m$, and hence $c\geq3^{2m}+1$.  In both cases
\[
 c\geq\frac{9^m+1}{2}.
\]

Take now
\[
 r=\frac{3^m+1}{2}
 \quad\hbox{and}\quad
 s=\frac{3^m-1}{2}.
\]
These integers are coprime and have opposite parity.  They give the primitive triple
\[
 \left(3^m,\frac{3^{2m}-1}{2},\frac{3^{2m}+1}{2}\right).
\]
The three entries have the same image under $f_m$.  It is proved that the least possible hypotenuse is $(9^m+1)/2$, and hence (2) follows.

Finally, suppose that $d\mid m$.  The map
\[
 \iota:\mathbb Z/d\mathbb Z\longrightarrow\mathbb Z/m\mathbb Z,
 \qquad
 (a+d\mathbb Z)\iota=\frac md a+m\mathbb Z,
\]
is injective.  If $f:\N\to\mathbb Z/d\mathbb Z$ is a standard morphism, then $f\iota$ is a standard morphism and a triple is monochromatic for $f\iota$ if and only if it is monochromatic for $f$.  Thus $T(d)\leq T(m)$, and hence (3) is proved.  The theorem is proved.
\end{proof}

It follows from the previous theorem that
\[
 \frac{9^m+1}{2}\leq T(m)<\infty.
\]
The exact values $T(2)=533$ and $T(3)=4633$ show that the valuation obstruction is not sharp in these two cases.  The determination of an effective upper bound and of the asymptotic growth of $T(m)$ remains open.

\section*{Acknowledgements}
The first author is supported by national funds through the FCT – Fundação para a Ciência e a Tecnologia, I.P., under the projects UID/297/2025 and UID/PRR/297/2025 (Center for Mathematics and Applications - NOVA Math).

The second author was supported by national funds through the Fundação para a Ciência e Tecnologia, FCT, under the project
UID/04674/2025.

\end{document}